\tikzstyle{punkt}=[circle, fill=black, minimum size=1mm,inner sep=0pt, draw]
\def\frk{\mathfrak}               
\def\Phi{{\frk N}}
\def\opn#1#2{\def#1{\operatorname{#2}}} 
\opn\chara{char} \opn\length{\ell} \opn\pd{pd} \opn\rk{rk}
\opn\projdim{proj\,dim} \opn\injdim{inj\,dim} \opn\rank{rank}
\opn\depth{depth} \opn\grade{grade} \opn\height{height}
\opn\embdim{emb\,dim} \opn\codim{codim}
\opn\Tr{Tr} \opn\bigrank{big\,rank}
\opn\superheight{superheight}\opn\lcm{lcm}
\opn\trdeg{tr\,deg}
\opn\reg{reg} \opn\lreg{lreg} \opn\ini{in} \opn\lpd{lpd}
\opn\size{size}\opn{\mult}{mult}
\opn\div{div} \opn\Div{Div} \opn\cl{cl} \opn\Cl{Cl}
\opn\Spec{Spec} \opn\Supp{Supp} \opn\supp{supp} \opn\Sing{Sing}
\opn\Ass{Ass} \opn\Min{Min}
\opn\Ann{Ann} \opn\Rad{Rad} \opn\Soc{Soc}
\opn\Syz{Syz} \opn\Im{Im} \opn\Ker{Ker} \opn\Coker{Coker}
\opn\Am{Am} \opn\Hom{Hom} \opn\Tor{Tor} \opn\Ext{Ext}
\opn\End{End} \opn\Aut{Aut} \opn\id{id} \opn\ini{in}
\opn\nat{nat}
\opn\pff{pf}
\opn\Pf{Pf} \opn\GL{GL} \opn\SL{SL} \opn\mod{mod} \opn\ord{ord}
\opn\Gin{Gin}
\opn\Hilb{Hilb}\opn\adeg{adeg}\opn\std{std}\opn\ip{infpt}
\opn\Pol{Pol}
\opn\sat{sat}
\opn\Var{Var}
\opn\Gen{Gen}
\opn\aff{aff} \opn\con{conv} \opn\relint{relint} \opn\st{st}
\opn\lk{lk} \opn\cn{cn} \opn\core{core} \opn\vol{vol}
\opn\link{link} \opn\star{star}
\opn\gr{gr}
\def\pot#1#2{#1[\kern-0.28ex[#2]\kern-0.28ex]}
\opn\dirlim{\underrightarrow{\lim}}
\opn\inivlim{\underleftarrow{\lim}}
\def\Implies{\ifmmode\Longrightarrow \else
        \unskip${}\Longrightarrow{}$\ignorespaces\fi}
\def\implies{\ifmmode\Rightarrow \else
        \unskip${}\Rightarrow{}$\ignorespaces\fi}
\def\iff{\ifmmode\Longleftrightarrow \else
        \unskip${}\Longleftrightarrow{}$\ignorespaces\fi}
\newtheorem{Theorem}{Theorem}[section]
\newtheorem{Lemma}[Theorem]{Lemma}
\newtheorem{Corollary}[Theorem]{Corollary}
\newtheorem{Proposition}[Theorem]{Proposition}
\newtheorem{Remark}[Theorem]{Remark}
\newtheorem{Example}[Theorem]{Example}
\newtheorem{Question}[Theorem]{Question}
\let\epsilon\varepsilon
\let\phi=\varphi
\let\kappa=\varkappa
\def\qed{\ifhmode\textqed\fi
      \ifmmode\ifinner\quad\qedsymbol\else\dispqed\fi\fi}
\def\textqed{\unskip\nobreak\penalty50
       \hskip2em\hbox{}\nobreak\hfil\qedsymbol
       \parfillskip=0pt \finalhyphendemerits=0}
\def\dispqed{\rlap{\qquad\qedsymbol}}
\opn\dist{dist}
\def\pnt{{\raise0.5mm\hbox{\large\bf.}}}
\opn\Lex{Lex}
\opn\diam{diam}
\begin{document}
\title{Binomial edge ideals of Cameron--Walker graphs}
\author[T.~Hibi]{Takayuki Hibi}
\address[Takayuki Hibi]
{Department of Pure and Applied Mathematics, 
Graduate School of Information Science and Technology, 
Osaka University, 
Suita, Osaka 565-0871, Japan}
\email{hibi@math.sci.osaka-u.ac.jp}
\author[S.~Saeedi~Madani]{Sara Saeedi Madani}
\address[Sara Saeedi Madani]
{Department of Mathematics and Computer Science, Amirkabir University of Technology, Tehran, Iran, and School of Mathematics, Institute for Research in Fundamental Sciences, Tehran, Iran} 
\email{sarasaeedi@aut.ac.ir, sarasaeedim@gmail.com}
\subjclass[2020]{05E40, 13H10}
\keywords{Cameron--Walker graph, binomial edge ideal, Cohen--Macaulay, depth}

\begin{abstract}
Let $G$ be a Cameron--Walker graph on $n$ vertices and $J_G$ the binomial edge ideal of $G$.  Let $S$ denote the polynomial ring in $2n$ variables over a field.  It is shown that the following conditions are equivalent: (i) $S/J_G$ is Cohen--Macaulay; (ii) $J_G$ is unmixed; (iii) $\dim (S/J_G) = n+1$; (iv) (a) $n = 3$ and $G$ is a path of length $2$ or (b) $n = 5$ and $G$ is a path of length $4$ or (c) $n=5$ and $G$ is obtained by attaching a path of length $2$ to a triangle. Moreover, the depth of $S/J_G$ is computed for a class of Cameron--Walker graphs, called minimal dense Cameron--Walker graphs. As an application, it is proved that finite graphs $G$ with $\depth(S/J_G)=6$ can have any number of vertices~$n\geq 6$. Finally, it is shown that given integers $t,n$ with $6\leq t\leq n+1$, there exists a finite connected graph $G$ with $\depth (S/J_G)=t$. 
\end{abstract}
\maketitle

\section*{Introduction}
Let $G$ be a finite graph on the vertex set $[n]=\{1,\ldots,n\}$ and the edge set $E(G)$ with no loop, no multiple edge and no isolated vertex.  Let $S=K[x_1, \ldots, x_n, y_1, \ldots, y_n]$ denote the polynomial ring in $2n$ variables over a field $K$.  The {\em binomial edge ideal} of $G$, introduced by \cite{HHHKR} and \cite{O} independently, is the ideal $J_G$ of $S$ generated by the binomials $x_iy_j - x_jy_i$ with $i<j$ and 
$\{i,j\} \in E(G)$.  

We are interested in the binomial edge ideal of a Cameron--Walker graph \cite{HHKO}.  A {\em Cameron--Walker} graph is a finite graph consisting of a connected bipartite graph with vertex partition $U \cup V$ such that there is at least one leaf edge attached to each vertex $i \in U$ and that there may be possibly some pendant triangles attached to each vertex $j \in V$ (\cite[Fig. 1]{HHKO}). 

Let $G$ be a Cameron--Walker graph. In the present paper, we study unmixedness of $J_G$ and Cohen--Macaulayness of $S/J_G$. We also discuss the Krull dimension as well as the depth of $S/J_G$. More precisely, the structure of the paper is as follows. In Section~\ref{CM section}, given a Cameron--Walker graph on $n$ vertices, it is shown that the following conditions are equivalent: (i) $S/J_G$ is Cohen--Macaulay; (ii) $J_G$ is unmixed; (iii) $\dim (S/J_G) = n+1$; (iv) Either (a) $n = 3$ and $G$ is a path of length $2$, or (b) $n = 5$ and $G$ is a path of length $4$, or (c) $n=5$ and $G$ is obtained by attaching a path of length $2$ to a triangle. In Section~\ref{depth section}, we introduce the class of minimal dense Cameron--Walker graphs and provide an explicit formula for $\depth(S/J_G)$ when $G$ belongs to this class. Indeed,  for such a graph $G$ it is shown that $\depth(S/J_G)=n-|V|+2$.  As an application of this result, it is also shown that for any $n\geq 6$, there exists a minimal dense Cameron--Walker graph $G$ with $n$ vertices and $\depth(S/J_G)=6$. Note that~$6$ is the smallest value for depth of binomial edge ideals which has not been combinatorially characterized yet. Finally, in Section~\ref{possible depth section}, we discuss all the possible values for depth of a binomial edge ideal. Indeed, we show that for any integers $t,n$ with $6\leq t\leq n$, there exists a finite connected graph with $n$ vertices such that $\depth(S/J_G)=t$.

\section{Cameron--Walker graphs with Cohen--Macaulay binomial edge ideals}\label{CM section}

In this section, we consider unmixed and Cohen--Macaulay properties as well as the Krull dimension of $S/J_G$ where $G$ is a Cameron--Walker graph. First we recall the minimal prime ideals of binomial edge ideals in general.  

Let $G$ be a finite connected graph on $[n]$. Recall that an \emph{induced subgraph} of $G$ on $W\subseteq [n]$, denoted by $G|_{W}$, is a graph whose vertex set is $W$ and its edge set consists of those edges of $G$ which are included in $W$. Also, recall that a vertex $i$ of $G$ is a \emph{cut point} of $G$ if $G|_{[n]-\{i\}}$ has more connected components than $G$.  

Given $T\subset [n]$, a prime ideal $P_T(G)$ associated with $T$ is defined as 
\[
P_T(G)=(x_i,y_i: i\in T)+J_{\Tilde{G_1}}+\cdots+J_{\Tilde{G}_{c_G(T)}}
\]
where $G_1,\ldots,G_{c_G(T)}$ are the connected components of $G|_{[n]-T}$ and $\Tilde{H}$ is the complete graph on the vertex set $V(H)$. 

We say that $T\subset [n]$ has the \emph{cut point property} if each $i\in T$ is a cut point of the induced graph $G|_{([n]-T)\cup \{i\}}$ of $G$.
It was shown \cite{HHHKR} that the minimal prime ideals of $J_G$ are closely related to those subsets of the vertices of $G$ with the cut point property. More precisely,  
  \[ 
  \Min(J_G)=\{P_T(G): T=\emptyset~\textit{or}~T~\text{has the cut point property} \}. 
  \]
It is also known from \cite{HHHKR} that 
\begin{equation}\label{height formula}
    \height(P_T(G))=n+|T|-c_G(T)
\end{equation}
and 
\begin{equation}\label{dimension formula}
\dim(S/J_G)=\max \{n-|T|+c_G(T): T=\emptyset~\textit{or}~T~\text{has the cut point property}
\}.
\end{equation}

Recall that a \emph{chordal} graph is a graph whose induced cycles have length at most~$3$. Also, a \emph{block} graph is a chordal graph in which any two distinct maximal cliques (i.e. complete subgraphs) intersect in at most one vertex. 

\medskip
The main theorem of this section is the following.

\begin{Theorem}
    \label{main}
    Let $G$ be a Cameron--Walker graph on $n$ vertices. Then the following conditions are equivalent:
    \begin{itemize}
        \item[(i)] $S/J_G$ is Cohen--Macaulay;
        \item[(ii)] $J_G$ is unmixed; 
        \item[(iii)] $\dim (S/J_G) = n+1$; 
        \item[(iv)] Either $n = 3$ and $G$ is the path of length $2$ (Figure~\ref{P3}), or $n = 5$ and $G$ is the path of length $4$ (Figure~\ref{P5}), or $n=5$ and $G$ is obtained by attaching the path of length $2$ to a triangle (Figure~\ref{Block with a triangle}).
    \end{itemize}
\end{Theorem}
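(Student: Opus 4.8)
The plan is to prove the cycle $(i)\Rightarrow(ii)\Rightarrow(iii)\Rightarrow(iv)\Rightarrow(i)$. The first two steps are formal. For $(i)\Rightarrow(ii)$ I invoke the standard fact that a Cohen--Macaulay ring is unmixed, so $S/J_G$ Cohen--Macaulay forces $J_G$ unmixed. For $(ii)\Rightarrow(iii)$, note that $P_\emptyset(G)=J_{\widetilde G}$ is always a minimal prime of $J_G$, and by \eqref{height formula} with $T=\emptyset$ and $c_G(\emptyset)=1$ it has $\height P_\emptyset(G)=n-1$. Unmixedness means every minimal prime of $J_G$ has the same height, hence height $n-1$; equivalently $c_G(T)=|T|+1$ for every admissible $T$, and then \eqref{dimension formula} gives $\dim(S/J_G)=2n-(n-1)=n+1$.

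The combinatorial heart is $(iii)\Rightarrow(iv)$, where I expect the real work. Write the Cameron--Walker graph $G$ with bipartite core on $U\cup V$ (both nonempty by the definition, since the leaf condition and the bipartite connectivity exclude the degenerate readings $U=\emptyset$ or $V=\emptyset$), where $U=\{u_1,\dots,u_s\}$ with $u_i$ carrying $\ell_i\ge 1$ leaves, $V=\{v_1,\dots,v_r\}$, and $t$ pendant triangles attached to $V$ in total; put $L=\sum_i\ell_i\ge s$. The idea is to feed two carefully chosen cut-point sets into \eqref{dimension formula}. First take $T=U$: since each $u_i$ has a leaf and a neighbour in $V$, deleting $u_i$ from $G|_{([n]-U)\cup\{u_i\}}$ isolates that leaf and so strictly increases the number of components, whence $u_i$ is a cut point and $T=U$ has the cut point property. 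Removing all of $U$ leaves the $L$ leaves isolated and, because $V$ is an independent set with each triangle attached to a single vertex of $V$, each vertex of $V$ together with its triangles in its own component; thus $c_G(U)=L+r$. Now \eqref{dimension formula} together with $(iii)$ forces $L+r-s\le 1$, and $L\ge s$ yields $r\le 1$, so $r=1$ and $L=s$: there is a single core vertex $V=\{v\}$ and each $u_i$ has exactly one leaf.

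With $V=\{v\}$ I next take $T=\{v\}$. When $s+t\ge 2$ the vertex $v$ is a cut point of $G$, and deleting it leaves the $s$ leaf-edges $u_i$--(leaf$_i$) and the $t$ triangles as separate components, so $c_G(\{v\})=s+t$; \eqref{dimension formula} and $(iii)$ then force $s+t\le 2$ (the case $s+t=1$ being the degenerate situation where $v$ is itself a leaf, which imposes nothing). Combined with $s\ge 1$ this leaves exactly $(s,t)\in\{(1,0),(2,0),(1,1)\}$, and each configuration is uniquely realized: $(1,0)$ gives the path of length $2$, $(2,0)$ gives the path of length $4$ (the two leaves of $u_1,u_2$ and the shared neighbour $v$), and $(1,1)$ gives a triangle on $v$ with an attached path of length $2$. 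This is precisely $(iv)$.

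Finally, for $(iv)\Rightarrow(i)$ I check Cohen--Macaulayness of the three explicit graphs. Each is a closed graph (indeed a block graph that is a path of cliques $K_2$--$K_2$, $K_2$--$K_2$--$K_2$, and $K_3$--$K_2$--$K_2$), so $S/J_G$ is Cohen--Macaulay either by the known classification of Cohen--Macaulay binomial edge ideals of closed (equivalently, block) graphs, or—since the graphs are tiny—by a direct computation showing $\depth(S/J_G)=n+1=\dim(S/J_G)$. The \emph{main obstacle} is the implication $(iii)\Rightarrow(iv)$: everything hinges on choosing the cut-point sets $T=U$ and $T=\{v\}$ and on carefully verifying both the cut point property and the component counts $c_G(T)$ for the Cameron--Walker structure; once the two inequalities $r\le 1$ and $s+t\le 2$ are secured, the classification is immediate.
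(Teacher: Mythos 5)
Your proposal is correct and follows essentially the same route as the paper: the formal implications (i)$\Rightarrow$(ii)$\Rightarrow$(iii) via the height of $P_\emptyset(G)$, the combinatorial step (iii)$\Rightarrow$(iv) by feeding the cut sets $T=U$ and $T=\{v\}$ into the dimension formula, and (iv)$\Rightarrow$(i) by recognizing the three graphs as block graphs with Cohen--Macaulay binomial edge ideals. The only difference is cosmetic bookkeeping: you absorb the ``at most one leaf per vertex of $U$'' and ``at most one triangle per vertex of $V$'' exclusions into the single computations $c_G(U)=L+r$ and $c_G(\{v\})=s+t$, whereas the paper first rules them out separately with $T=\{u\}$ and $T=\{v\}$.
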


\begin{proof}
 Let $G$ be a Cameron--Walker graph whose connected bipartite graph has the vertex partition $U\cup V$. 
 
    (i)\implies (ii) is well--known.

    (ii)\implies (iii) follows from (\ref{dimension formula}), since $\height(P_{\emptyset}(G))=n-1$ by (\ref{height formula}). 

    (iii)\implies (iv) If there is a vertex $u\in U$ with at least two leaves, then for $T=\{u\}$, we have $n-|T|+c_G(T)\geq n+2$. This is impossible, because $\dim (S/J_G) = n+1$. Similarly, if there exists a vertex $v\in V$ with at least two triangles, then for $T=\{v\}$, we have $n-|T|+c_G(T)\geq n+2$ which is also impossible. Therefore, any vertex in $U$ has exactly one leaf and any vertex in $V$ has at most one triangle. Now, let $T=U$. Then $T$ has the cut point property, since $G$ is connected. Thus, $n-|T|+c_G(T)=n+|V|$, since in this case we have $c_G(T)=|U|+|V|$. Since $\dim (S/J_G) = n+1$, it follows that $|V|=1$. Let $V=\{v\}$. On the other hand, if $|U|\geq 3$, then by putting $T=\{v\}$, we get $n-|T|+c_G(T)\geq n+2$, a contradiction. Therefore, $|U|\leq 2$. Now, we distinguish two following cases:

    (1) Suppose that a triangle is attached to $v$. Then $|U|=1$, since otherwise the dimension is bigger than $n+1$ by considering $T=\{v\}$. Therefore,  $G$ is the graph obtained by attaching a path of length $2$ to a triangle (Figure~\ref{Block with a triangle}). 

    (2) Suppose that no triangle is attached to $v$. If $|U|=1$, then $G$ is the path of length $2$ (Figure~\ref{P3}). If $|U|=2$, then $G$ is the path of length $4$ (Figure~\ref{P5}).

    (iv)\implies (i) All of the three graphs described in (iv) are block graphs whose binomial edge ideals are Cohen--Macaulay (\cite[Theorem 1.1]{EHH}).
\end{proof}

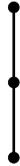
\begin{figure}[h!]
	\centering 
\begin{tikzpicture}[line cap=round,line join=round,>=triangle 45,x=1cm,y=1cm]
\draw [line width=1pt] (-6,5)-- (-6,3);
\begin{scriptsize}
\draw [fill=black] (-6,5) circle (2pt);
\draw [fill=black] (-6,4) circle (2pt);
\draw [fill=black] (-6,3) circle (2pt);
\end{scriptsize}
\end{tikzpicture} 
\caption{The path of length $2$}
\label{P3}
\end{figure}

\begin{figure}[h!]
	\centering 
\begin{tikzpicture}[line cap=round,line join=round,>=triangle 45,x=1cm,y=1cm]
\draw [line width=1pt] (-6,5)-- (-6,4);
\draw [line width=1pt] (-4,5)-- (-4,4);
\draw [line width=1pt] (-4,4)-- (-5,3);
\draw [line width=1pt] (-6,4)-- (-5,3);
\begin{scriptsize}
\draw [fill=black] (-6,5) circle (2pt);
\draw [fill=black] (-6,4) circle (2pt);
\draw [fill=black] (-5,3) circle (2pt);
\draw [fill=black] (-4,4) circle (2pt);
\draw [fill=black] (-4,5) circle (2pt);
\end{scriptsize}
\end{tikzpicture}
\caption{The path of length $4$}
\label{P5}
\end{figure}

\begin{figure}[h!]
	\centering 
\begin{tikzpicture}[line cap=round,line join=round,>=triangle 45,x=1cm,y=1cm]
\draw [line width=1pt] (-6,5)-- (-6,4);
\draw [line width=1pt] (-6,4)-- (-6,3);
\draw [line width=1pt] (-6,3)-- (-7,2);
\draw [line width=1pt] (-6,3)-- (-5,2);
\draw [line width=1pt] (-5,2)-- (-7,2);
\begin{scriptsize}
\draw [fill=black] (-6,5) circle (2pt);
\draw [fill=black] (-6,4) circle (2pt);
\draw [fill=black] (-6,3) circle (2pt);
\draw [fill=black] (-7,2) circle (2pt);
\draw [fill=black] (-5,2) circle (2pt);
\end{scriptsize}
\end{tikzpicture}
\caption{The graph obtained by attaching the path of length~$2$ to a triangle}
\label{Block with a triangle}
\end{figure}
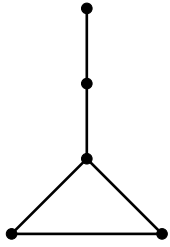

\section{Depth of binomial edge ideals of Cameron--Walker graphs}\label{depth section}

In this section we consider the depth of $S/J_G$ when $G$ is a Cameron--Walker graph and compute it in certain cases and make a comparison with the well--known lower and upper bounds given in~(\ref{bounds}).

Let $G$ be a finite connected graph on $n$ vertices. Recall that a vertex is called a \emph{free} vertex of $G$ if it is contained in exactly one maximal clique of $G$, and we denote the number of free vertices of $G$ by $f(G)$. Also, recall that the length of a longest path in $G$ is the \emph{diameter} of $G$ and in this paper we denote it by $d(G)$. Moreover, we denote by $\kappa(G)$ the \emph{vertex connectivity} of $G$ which is the minimum number of vertices of $G$ whose deletion disconnects $G$. 

It is known \cite{BN, RSK2} that 
\[
f(G) + d(G) \leq \depth (S/J_G) \leq n - \kappa(G) + 2.
\]
If $G$ is a Cameron--Walker graph, then $\kappa(G) = 1$, and hence we have  
\begin{equation}\label{bounds}
   f(G) + d(G) \leq \depth (S/J_G) \leq n + 1.
\end{equation}


\medskip
We denote the bipartite subgraph of a Cameron--Walker graph $G$ induced on the vertex set $U\cup V$ by $B_G$. If $B_G$ is a tree, then $G$ is a block graph, and hence we have the next proposition which follows from \cite[Theorem~1.1]{EHH} where it was shown that $\depth (S/J_G)=n+1$ whenever $G$ is a connected block graph with $n$ vertices. Proposition~\ref{depth-tree} provides infinitely many Cameron--Walker graphs with nonCohen--Macaulay binomial edge ideals which have the maximum depth according to (\ref{bounds}). 

\begin{Proposition}\label{depth-tree}
    Let $G$ be a Cameron--Walker graph with $n$ vertices such that $B_G$ is a tree. Then 
    \[
    \depth(S/J_G)=n+1.
    \]
\end{Proposition}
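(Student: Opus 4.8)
The plan is to reduce the statement entirely to the theorem of Ene, Herzog and Hibi \cite[Theorem~1.1]{EHH}, which asserts that $\depth(S/J_G)=n+1$ for every connected block graph $G$ on $n$ vertices. Since a Cameron--Walker graph is connected by definition, the whole content of the proposition lies in the structural claim that $G$ is a block graph whenever its bipartite part $B_G$ is a tree. So I would spend the proof establishing that claim and then invoke \cite[Theorem~1.1]{EHH}.

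First I would check that $G$ is chordal. Cycles in $G$ can come only from the edges of $B_G$ or from the pendant triangles, because the leaf edges attached to the vertices of $U$ are pendant and produce no cycle. As $B_G$ is assumed to be a tree it contributes no cycle, so every cycle of $G$ must use an edge of some pendant triangle; and since a triangle attached to $v\in V$ meets the rest of $G$ only in $v$, the only cycle it can create is the triangle itself. Hence every induced cycle of $G$ has length~$3$ and $G$ is chordal.

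Next I would identify the maximal cliques and verify the block condition. The maximal cliques of $G$ are precisely the edges of $B_G$, the leaf edges at the vertices of $U$, and the pendant triangles at the vertices of $V$. One point worth spelling out is that a tree edge $\{i,j\}$ of $B_G$ is indeed maximal: bipartiteness prevents $i$ and $j$ from having a common neighbour inside $U\cup V$, and the pendant structure prevents a leaf or a triangle vertex from being one, so $\{i,j\}$ lies in no triangle. A brief case check then shows that two distinct maximal cliques share at most one vertex: two tree edges meet in at most one vertex since $B_G$ is a tree; a pendant triangle meets any other clique only in its apex in $V$; and a leaf edge meets any other clique only in its endpoint in $U$. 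Thus $G$ is a block graph.

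Putting the pieces together, $G$ is a connected block graph on $n$ vertices, so \cite[Theorem~1.1]{EHH} gives $\depth(S/J_G)=n+1$. There is no serious obstacle here: the entire argument is the observation that forbidding cycles in $B_G$ forces the rigid clique structure of a block graph, and the only step needing a little care is the clique bookkeeping—confirming that the tree edges stay maximal and that no two maximal cliques overlap in two vertices.
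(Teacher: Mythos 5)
Your proof is correct and follows exactly the paper's route: the paper likewise observes that $G$ is a block graph whenever $B_G$ is a tree and then invokes \cite[Theorem~1.1]{EHH}. The only difference is that you spell out the chordality and clique-intersection checks that the paper leaves implicit.
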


Next, it is natural to ask about smaller values of depth and in particular attaining the lower bound given in (\ref{bounds}) for binomial edge ideals of Cameron--Walker graphs. We start with the following example.

\begin{Example}\label{smallest example}
    Let $G$ be the Cameron--Walker graph on $6$ vertices shown in Figure~\ref{induction base}. This graph is indeed the smallest Cameron--Walker graph $H$ for which $B_{H}$ is not a tree. Computations by \emph{CoCoA} \cite{CoCoA} show that $\depth(S/J_{H})=6$, namely equal to the number of vertices of $H$.   
\end{Example}

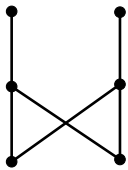
\begin{figure}[h!]
	\centering
\begin{tikzpicture}[line cap=round,line join=round,>=triangle 45,x=1cm,y=1cm]
\draw [line width=1pt] (-5,5)-- (-5,4);
\draw [line width=1pt] (-5,4)-- (-5,3);
\draw [line width=1pt] (-3.56,4.99)-- (-3.56,4.03);
\draw [line width=1pt] (-3.56,4.03)-- (-3.56,3.03);
\draw [line width=1pt] (-5,4)-- (-3.56,3.03);
\draw [line width=1pt] (-5,3)-- (-3.56,4.03);
\begin{scriptsize}
\draw [fill=black] (-5,4) circle (2pt);
\draw [fill=black] (-5,3) circle (2pt);
\draw [fill=black] (-3.56,4.03) circle (2pt);
\draw [fill=black] (-3.56,3.03) circle (2pt);
\draw [fill=black] (-5,5) circle (2pt);
\draw [fill=black] (-3.56,4.99) circle (2pt);
\end{scriptsize}
\end{tikzpicture}
\caption{The smallest Cameron--Walker graph $H$ for which $B_H$ is a not a tree}
\label{induction base}
\end{figure}

Motivated by the finite graph $H$ shown in Figure~\ref{induction base}, in the following we consider a class of Cameron--Walker graphs in which $B_G$ is not a tree. First, note that we denote by $K_t$ the complete graph on $t$ vertices. A graph is called \emph{triangle--free} if it does not have any induced subgraph isomorphic to $K_3$.  

Now, let $G$ be a Cameron--Walker graph such that $B_G$ is a complete bipartite graph which is not a tree. Then we call $G$ a \emph{dense Cameron--Walker} graph. If $G$ is triangle--free and each vertex in $U$ has only one attached leaf, then we say that $G$ is a \emph{minimal dense Cameron--Walker} graph. The graph $H$ depicted in Figure~\ref{induction base} is indeed the smallest minimal dense Cameron--Walker graph.    

In the sequel, we give an explicit formula for the depth of $S/J_G$ where $G$ is a minimal dense Cameron--Walker graph. For this purpose, we need to recall some further ingredients. The following lemma, as a part of the well--known depth lemma, is helpful to prove our theorem. 

\begin{Lemma}\label{depth lemma}
 Let $R$ be a standard graded polynomial ring over the field $K$, and let $M$, $N$ and $P$ be finitely generated graded $R$-modules. If 
 \[
 0\rightarrow M\rightarrow N\rightarrow P\rightarrow 0
 \]
is a short exact sequence and if $\depth(N) > \depth(P)$, then
\[
\depth(M)=\depth(P)+1.
\]
\end{Lemma}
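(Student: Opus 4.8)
The plan is to prove this via the standard homological characterisation of depth together with the long exact sequence attached to the given short exact sequence. Recall that for a finitely generated graded module $M$ over $R$ one has $\depth M = \min\{i : \Coh{i}{M}\neq 0\}$, where $\mm$ denotes the graded maximal ideal of $R$; equivalently one may work with the residue field $K=R/\mm$, since $\depth M = \min\{i : \Ext^i_R(K,M)\neq 0\}$. I will use the local cohomology formulation, but the $\Ext$ version runs identically.

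First I would apply the local cohomology functors to the short exact sequence $0\to M\to N\to P\to 0$, obtaining the long exact sequence
\[
\cdots \to \Coh{i}{M}\to \Coh{i}{N}\to \Coh{i}{P}\to \Coh{i+1}{M}\to \cdots .
\]
Set $b=\depth N$ and $c=\depth P$, so that by hypothesis $b>c$, i.e. $b\geq c+1$. Reading off the upper bound $\depth M\leq c+1$ is then immediate: in the segment
\[
\Coh{c}{N}\to \Coh{c}{P}\to \Coh{c+1}{M}
\]
the left-hand term vanishes because $c<b=\depth N$, so the connecting map embeds $\Coh{c}{P}$ into $\Coh{c+1}{M}$. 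As $c=\depth P$ forces $\Coh{c}{P}\neq 0$, it follows that $\Coh{c+1}{M}\neq 0$, whence $\depth M\leq c+1$.

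For the matching lower bound I would show $\Coh{i}{M}=0$ for every $i\leq c$. Fixing such an $i$ and looking at the segment
\[
\Coh{i-1}{P}\to \Coh{i}{M}\to \Coh{i}{N},
\]
the right-hand term vanishes since $i\leq c<b$, and the left-hand term vanishes since $i-1<c=\depth P$; exactness then forces $\Coh{i}{M}=0$. Combining the two bounds yields $\depth M=c+1=\depth P+1$, as required.

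There is no serious obstacle here: the entire argument is index bookkeeping on the long exact sequence. The only point that genuinely consumes the hypothesis $\depth N>\depth P$ is that it simultaneously makes $\Coh{c}{N}$ vanish (yielding the injection that produces the upper bound) and keeps $\Coh{i}{N}$ zero for all $i\leq c$ (feeding the lower bound). If one only assumed $\depth N\geq \depth P$, the term $\Coh{c}{N}$ need not vanish and the conclusion can fail, as already a split sequence with $\depth M=\depth P$ shows; thus the strict inequality is exactly what the proof uses.
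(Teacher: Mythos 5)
Your argument is correct: the long exact sequence in local cohomology, together with the characterization $\depth M=\min\{i: H^i_{\mm}(M)\neq 0\}$, gives exactly the two bounds you derive, and your index bookkeeping is accurate (the vanishing of $H^c_{\mm}(N)$ yields the injection $H^c_{\mm}(P)\hookrightarrow H^{c+1}_{\mm}(M)$ for the upper bound, and the vanishing of $H^{i-1}_{\mm}(P)$ and $H^i_{\mm}(N)$ for $i\leq c$ gives the lower bound). The paper does not prove this lemma at all --- it is quoted as a standard piece of the well-known depth lemma --- so there is nothing to compare against; your proof is the standard one and would serve as a complete justification.
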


\medskip
In \cite{KS} the class of \emph{generalized block graphs} was defined as a generalization of the class of block graphs. Let $G$ be a chordal graph such that for every three maximal cliques of $G$ whose sets of vertices have a nonempty intersection, the intersection of the vertex sets of each pair of them is the same, (see for example~\cite[Figure~1 and Figure~2]{KS}). It is clear that every block graph is also a generalized block graph. An explicit formula for $\depth(S/J_G)$ is known for any generalized block graph $G$. To state that formula, we need some more concepts to recall. By a \emph{cut set} of a finite graph $G$, we mean a subset of vertices of $G$ whose deletion results in a graph which has more connected components than $G$. Moreover, by a \emph{minimal cut set} of $G$, we mean a cut set which is minimal under inclusion. Also, recall that the \emph{clique number} of $G$, denoted by $\omega(G)$, is the maximum size of the maximal cliques of $G$. Now, let $G$ be a generalized block graph on $[n]$. For any 
$i = 1,\ldots,\omega(G)-1$, let 
\[
\mathcal{A}_i(G) = \{A \subseteq [n] : |A| = i, A~\textit{is~a~minimal~cut~set~of}~G\}.
\]
The next theorem gives the aforementioned formula in terms of $|\mathcal{A}_i(G)|$. 

\begin{Theorem}\label{generalized block}
 \cite[Theorem~3.2]{KS}
 Let $G$ be a connected generalized block graph on $[n]$. Then
 \[
 \depth(S/J_G) = n+1-\sum_{i=2}^{\omega(G)-1} (i-1)|\mathcal{A}_i(G)|.
 \] 
\end{Theorem}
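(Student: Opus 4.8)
The plan is to prove the formula by induction on the number of vertices $n$, using the standard decomposition of $J_G$ along a single vertex together with the depth lemma (Lemma~\ref{depth lemma}). The base case is that of a block graph: here every minimal cut set is a single cut point, so $\mathcal{A}_i(G)=\emptyset$ for all $i\geq 2$, the sum vanishes, and the asserted value $n+1$ is exactly the depth computed in \cite{EHH}. For the inductive step I would first record that the class of generalized block graphs is closed under the two operations used below, namely deleting a vertex and completing the neighborhood of a vertex to a clique; this closure is a purely combinatorial check from the defining condition on triples of maximal cliques.

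For the reduction, recall Ohtani's decomposition: for any vertex $v$,
\[
J_G = J_{G_v}\cap\bigl((x_v,y_v)+J_{G\setminus v}\bigr),
\]
where $G_v$ is obtained from $G$ by adding all edges among the neighbors $N_G(v)$. This yields the short exact sequence
\[
0\longrightarrow S/J_G\longrightarrow S/J_{G_v}\ \oplus\ S/\bigl((x_v,y_v)+J_{G\setminus v}\bigr)\longrightarrow S/\bigl((x_v,y_v)+J_H\bigr)\longrightarrow 0,
\]
where $H$ denotes $G\setminus v$ with $N_G(v)$ completed to a clique. The three graphs $G_v$, $G\setminus v$ and $H$ are generalized block graphs with strictly simpler cut-set structure, so the induction hypothesis applies to each of the three quotients; note that $\depth(S/((x_v,y_v)+J_W))$ equals the depth of $S'/J_W$ over the polynomial ring $S'$ in the variables other than $x_v,y_v$, and that for a disconnected $W$ this depth is additive over the connected components.

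I would choose $v$ to lie in a minimal cut set $A$ of size at least $2$, which exists precisely because $G$ is not a block graph; such a $v$ is non-simplicial, so $G_v\neq G$ and the decomposition is genuinely useful. Completing $N_G(v)$ merges the cliques meeting along $A$ and strictly decreases the quantity $\sum_{i\geq 2}(i-1)|\mathcal{A}_i|$, which drives the induction. Applying the induction hypothesis to the three terms and comparing depths, one checks that both summands of the middle module have depth strictly exceeding $\depth(S/((x_v,y_v)+J_H))$, so that Lemma~\ref{depth lemma} forces
\[
\depth(S/J_G)=\depth\bigl(S/((x_v,y_v)+J_H)\bigr)+1,
\]
and it remains to substitute the inductive formulas and simplify.

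The main obstacle is the combinatorial bookkeeping in the last step: one must express $|\mathcal{A}_i(G_v)|$, $|\mathcal{A}_i(G\setminus v)|$ and $|\mathcal{A}_i(H)|$ in terms of $|\mathcal{A}_i(G)|$ and the size of the cut set $A$ containing $v$, and verify that these combine to give exactly $n+1-\sum_{i=2}^{\omega(G)-1}(i-1)|\mathcal{A}_i(G)|$. Keeping track of how the minimal cut sets of size $\geq 2$ are created, destroyed, or shifted under vertex deletion and neighborhood completion, and confirming the depth inequality that makes Lemma~\ref{depth lemma} applicable at each stage, is where essentially all the work lies.
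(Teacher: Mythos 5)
The paper does not actually prove this statement: it is quoted from \cite[Theorem~3.2]{KS}, so there is no internal proof to compare against. Your overall toolkit --- induction with block graphs (\cite{EHH}) as base case, Ohtani's decomposition $J_G=J_{G_v}\cap((x_v,y_v)+J_{G\setminus v})$, the resulting short exact sequence, and a depth lemma --- is the right one and matches the spirit of the source. But the decisive step of your plan fails. Take $v$ in a minimal cut set $A$ with $|A|=a\ge 2$, as you propose; since every two maximal cliques of $G$ containing $v$ intersect exactly in $A$, the inductive formula (write $D=n+1-\sum_{i\ge 2}(i-1)|\mathcal{A}_i(G)|$ for the target value) gives $\depth(S/J_{G_v})=D+a-1$, $\depth(S_{v}/J_{G\setminus v})=D$, and $\depth(S_v/J_{G_v\setminus v})=D+a-2$. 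Thus the middle term $N$ of your sequence has depth $\min\{D+a-1,\,D\}=D$, while the right term $P$ has depth $D+a-2\ge D$. The hypothesis $\depth(N)>\depth(P)$ of Lemma~\ref{depth lemma} therefore fails for \emph{every} admissible choice of $v$, and the conclusion you want to draw, $\depth(S/J_G)=\depth(P)+1=D+a-1$, would be the wrong number whenever $a\ge 2$. A concrete counterexample to your claimed strict inequality is the diamond ($K_4$ minus an edge), where $D=4$ and $\depth(N)=\depth(P)=4$.

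What the exact sequence actually yields is the following: for $a\ge 3$ one is in the opposite branch $\depth(N)<\depth(P)$ of the full depth lemma (a case not covered by Lemma~\ref{depth lemma} as stated in this paper), which gives $\depth(S/J_G)=\depth(N)=D$; for $a=2$ one has $\depth(N)=\depth(P)$ and obtains only the lower bound $\depth(S/J_G)\ge\min\{\depth(N),\depth(P)+1\}=D$, so a matching upper bound must be supplied separately --- for instance via $\depth(S/J_G)\le\dim(S/P_T(G))$ for a suitably chosen associated prime, or an inductive upper bound for depth under vertex deletion. Since generalized block graphs all of whose nontrivial minimal cut sets have size exactly two are plentiful, the $a=2$ case cannot be avoided, and until it is closed the argument is incomplete. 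The ``combinatorial bookkeeping'' you identify as the main obstacle is in fact routine (it is exactly the three depth computations above); the genuine gap is the depth comparison and the missing upper bound.
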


 In the following, we sometimes need to deal with different subgraphs of a given finite graph simultaneously, and consequently with different polynomial rings over $K$. As usual in the literature, we denote by $S_w$ the polynomial ring over $K$ on the same variables of $S$ except $x_w$ and $y_w$ where $w$ is a vertex of $G$ which is removed from $G$. But in other cases where we remove more vertices from the underlying graph, to avoid fixing complicated notation, we simply emphasize in words that we consider \emph{the desired polynomial ring over $K$ with variables corresponding to the vertices of the underlying graph}, which means that we are removing the two corresponding variables to any of those omitted vertices. 

 \medskip
 Applying the Auslander--Buchsbaum formula to \cite[Proposition~1.3]{HR} implies the following

\begin{Proposition}\label{gluing graphs}
(see \cite[Proposition~1.3]{HR} )
    Let $G_1$ and $G_2$ be two graphs on disjoint sets of vertices, and let $G$ be the graph obtained by attaching $G_1$ and $G_2$ by identifying a vertex which is a free vertex in both $G_1$ and $G_2$. Then
    \[
    \depth(S/J_G)=\depth(S_1/J_{G_1})+\depth(S_2/J_{G_2})-2,
    \]
    where $S$, $S_1$ and $S_2$ are the desired polynomial rings over the field $K$ corresponding to the vertex set of $G$, $G_1$ and $G_2$, respectively.
\end{Proposition}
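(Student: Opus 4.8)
The plan is to read the statement directly off the projective dimension version recorded in \cite[Proposition~1.3]{HR} and then pass to depth via the Auslander--Buchsbaum formula, as the surrounding text already signals. Write $n_1 = |V(G_1)|$ and $n_2 = |V(G_2)|$. Since $G$ is obtained by identifying a single vertex of $G_1$ with a single vertex of $G_2$, the graph $G$ has $n_1 + n_2 - 1$ vertices, so $S_1$, $S_2$ and $S$ are polynomial rings in $2n_1$, $2n_2$ and $2(n_1+n_2-1)$ variables, respectively. All three are regular rings, and $S_1/J_{G_1}$, $S_2/J_{G_2}$, $S/J_G$ are finitely generated graded modules, so the Auslander--Buchsbaum formula applies in each case.

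First I would invoke \cite[Proposition~1.3]{HR}, which in its projective dimension formulation asserts that gluing at a common free vertex makes projective dimension additive, that is,
\[
\pd(S/J_G) = \pd(S_1/J_{G_1}) + \pd(S_2/J_{G_2}).
\]
Then I would substitute the three Auslander--Buchsbaum identities $\depth(S_i/J_{G_i}) = 2n_i - \pd(S_i/J_{G_i})$ for $i=1,2$ together with $\depth(S/J_G) = 2(n_1+n_2-1) - \pd(S/J_G)$ into the equation above and simplify. The terms $2n_1$ and $2n_2$ regroup into $\depth(S_1/J_{G_1}) + \depth(S_2/J_{G_2})$, while the remaining constant coming from the $-2$ in the variable count of $S$ produces exactly $\depth(S/J_G) = \depth(S_1/J_{G_1}) + \depth(S_2/J_{G_2}) - 2$.

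Since the entire analytic content is carried by the projective dimension additivity of \cite[Proposition~1.3]{HR}, there is no genuine obstacle left; the only point demanding care is the vertex bookkeeping, namely that the identified vertex is counted once in $G$ but contributes two variables to each of $S_1$ and $S_2$, which is precisely what yields the $-2$. I would also check that the hypothesis of \cite[Proposition~1.3]{HR} is satisfied, that is, that the glued vertex is free in both $G_1$ and $G_2$ — but this is exactly the assumption imposed in the statement.
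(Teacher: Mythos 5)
Your proposal is correct and is exactly the argument the paper intends: the paper offers no separate proof but states that the proposition follows by applying the Auslander--Buchsbaum formula to the projective-dimension additivity of \cite[Proposition~1.3]{HR}, which is precisely what you carry out, including the vertex-count bookkeeping that produces the $-2$.
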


\medskip
In the next theorem we consider the class of minimal dense Cameron--Walker graphs and compute the depth of their binomial edge ideal which in particular never attain the upper bound in (\ref{bounds}). First we fix some notation. 

Let $G$ be a finite graph on $[n]$ and $w$ a vertex of $G$. Then, denoted by $G-w$ we mean the induced subgraph $G|_{[n]-\{w\}}$ of $G$. Moreover, denoted by $G_w$ we mean the graph on $[n]$ whose edge set is 
\[
E(G)\cup \big{\{}\{a,b\}: \{a,b\}\subseteq N_G(w)\big{\}},
\]
where $N_G(w)$ is the set of all adjacent vertices to $w$ in $G$. We may also denote the vertex set of a graph $G$ by $V(G)$.  



\medskip
Now, we are ready to prove the main theorem of this section.
    
\begin{Theorem}\label{depth-nontree}
    Let $G$ be a minimal dense Cameron--Walker graph with $n$ vertices and let $U\cup V$ be the vertex partition for $B_G$. 
    Then 
    \[
    \depth(S/J_G)=n-|V|+2.
    \]
\end{Theorem}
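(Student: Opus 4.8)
The plan is to induct on $|U|$, applying the short exact sequence that compares $J_G$ with the ideals of $G-u$ and $G_u$ for a suitable vertex $u\in U$. Write $p=|U|$ and label the vertices so that $U=\{u_1,\dots,u_p\}$, $V=\{v_1,\dots,v_{|V|}\}$, and $\ell_i$ is the unique leaf attached to $u_i$; then $n=2p+|V|$, and the asserted value is $n-|V|+2=2p+2$. Fix $u=u_p$. Since $u_p$ lies in more than one maximal clique it is not a free vertex, so the well--known decomposition $J_{G}=J_{G_{u_p}}\cap\big((x_{u_p},y_{u_p})+J_{G-u_p}\big)$ holds and yields
\[
0\to S/J_{G}\to S/J_{G_{u_p}}\oplus S/\big((x_{u_p},y_{u_p})+J_{G-u_p}\big)\to S/\big((x_{u_p},y_{u_p})+J_{(G_{u_p})-u_p}\big)\to 0 .
\]
I would compute the depth of each module on the right and then read off $\depth(S/J_G)$ from Lemma~\ref{depth lemma}.

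The heart of the argument is that both $G_{u_p}$ and $(G_{u_p})-u_p$ are \emph{generalized block graphs}, so Theorem~\ref{generalized block} applies. Indeed, $N_G(u_p)=V\cup\{\ell_p\}$, so in $G_{u_p}$ the set $V$ becomes a clique, $V\cup\{u_p,\ell_p\}$ is a maximal clique $K_{|V|+2}$, each remaining $u_i$ spans with $V$ a maximal clique $K_{|V|+1}$, and each $\ell_i$ $(i<p)$ remains a leaf; deleting $u_p$ gives a graph of the same shape with one clique fewer. I would first verify chordality and that every triple of maximal cliques with nonempty common intersection meets pairwise in the same set (the only ``large'' overlaps occur along $V$). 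The decisive combinatorial step is the list of minimal cut sets: because each $u_i$ carries a leaf, every $u_i$ is itself a cut vertex, so the singletons $\{u_i\}$ exhaust the minimal cut sets of size $1$ and no superset of a $\{u_i\}$ can be minimal; this leaves $V$ as the \emph{unique} minimal cut set of size greater than one, of size $|V|$. Hence in Theorem~\ref{generalized block} only the index $i=|V|$ contributes, with $|\mathcal{A}_{|V|}|=1$, giving
\[
\depth\big(S/J_{G_{u_p}}\big)=n+1-(|V|-1)=n-|V|+2,\qquad
\depth\big(S/((x_{u_p},y_{u_p})+J_{(G_{u_p})-u_p})\big)=(n-1)+1-(|V|-1)=n-|V|+1 .
\]

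For the remaining summand, $G-u_p$ is the disjoint union of the isolated vertex $\ell_p$ with the minimal dense Cameron--Walker graph on $U\setminus\{u_p\}$ (or, when $p=2$, with the star on $V\cup\{u_1,\ell_1\}$, a tree). The isolated vertex adds two free variables and raises depth by $2$, so by the induction hypothesis for $p\ge 3$, and by Proposition~\ref{depth-tree} (the block graph formula) when $p=2$, this summand has depth at least $n-|V|+2$. Consequently the middle term of the sequence has depth $n-|V|+2$, which strictly exceeds the depth $n-|V|+1$ of the third term, so Lemma~\ref{depth lemma} gives
\[
\depth(S/J_G)=(n-|V|+1)+1=n-|V|+2 ,
\]
which both proves the formula and shows the upper bound in~(\ref{bounds}) is never attained here.

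The main obstacle is the middle step. One must confirm that $G_{u_p}$ and $(G_{u_p})-u_p$ genuinely satisfy the generalized block graph condition, the delicate point being that the cliques through $V$ overlap in a set of size $|V|$, so the clique--intersection hypothesis could \emph{a priori} fail; and, above all, one must enumerate the minimal cut sets exactly. The crucial feature is that attaching a leaf to every vertex of $U$ makes each $u_i$ a cut vertex, which is precisely what prevents any larger set through the $u_i$ from being a minimal cut set and isolates $V$ as the sole minimal cut set of size $>1$. This is what forces the two generalized--block--graph depths to differ by exactly $1$, and hence makes the stated form of the depth lemma close the induction without any borderline case.
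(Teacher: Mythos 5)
Your proposal is correct and follows essentially the same route as the paper: the same Ohtani decomposition $J_G=J_{G_{u}}\cap((x_u,y_u)+J_{G-u})$ and short exact sequence, the same induction (removing one $u_i$ with its leaf), the same use of the generalized block graph depth formula for the two modified graphs and of the depth lemma to close the argument. The only (harmless) variations are that you apply Theorem~\ref{generalized block} directly to $G_{u_p}$ and $G_{u_p}-u_p$ after identifying $V$ as their unique minimal cut set of size greater than one, whereas the paper first strips the leaves and reattaches them via Proposition~\ref{gluing graphs}, and that your handling of the case $|U|=2$ via Proposition~\ref{depth-tree} makes the computational base case of Example~\ref{smallest example} unnecessary.
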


\begin{proof}
Since $B_G$ is not a tree, we have $n\geq 6$ and $|U|, |V|\geq 2$. Let 
$U=\{u_1,\ldots,u_r\}$ with $r\geq 2$.
Let $u'_i$ be the only leaf attached to $u_i$ for each $i=1,\ldots,r$. We prove the theorem using induction on the number of vertices $n$. 
For $n=6$ the only possible case was observed in Example~\ref{smallest example} for which the result holds. Now, assume that $n>6$. 

By \cite[Lemma~4.8]{O}, $J_G=J_{G_{u_1}}\cap ((x_{u_1},y_{u_1})+J_{G-u_1})$. Therefore, we have the exact sequence  
\[
0 \rightarrow S/J_G \rightarrow S/J_{G_{u_1}}\oplus S_{u_1}/J_{G-u_1} \rightarrow S_{u_1}/J_{G_{u_1}-u_1} \rightarrow 0.
\]

Let $G'$ be the finite graph consisting of $r$ maximal cliques $F_1,\ldots,F_r$ where $V(F_1)=\{u_1,u'_1\}\cup V$ and $V(F_i)=\{u_i\}\cup V$ for each $i=2,\ldots,r$. Then, $G'$ is a generalized block graph with $n-|U|+1$ vertices and with exactly one minimal cut set which has cardinality~$|V|$. Therefore, it follows from Theorem~\ref{generalized block} that 
\[
\depth(S'/J_{G'})=n-|U|-|V|+3,
\]
where $S'$ is the desired polynomial ring over $K$ with variables corresponding to the verices of $G'$. Note that $G_{u_1}$ is the graph obtained by attaching the leaves $u'_2,\ldots,u'_r$ to the free vertices $u_2,\ldots,u_r$ of $G'$, respectively. Then, it follows from Proposition~\ref{gluing graphs} that 
\begin{equation}\label{A}
\depth(S/J_{G_{u_1}})=n-|U|-|V|+3+|U|-1=n-|V|+2,    
\end{equation}
since $\depth(R/J_{K_2})=3$ where $R$ is the desired polynomial ring on $4$ variables corresponding to the two vertices of $K_2$. 

Let $G''=G'-u_1$. Then $G''$ is a generalized block graph consisting of the maximal cliques $F_1-u_1,F_2,\ldots,F_r$ with $n-|U|$ vertices and with exactly one minimal cut set which has cardinality~$|V|$. Therefore, it follows from Theorem~\ref{generalized block} that 
\[
\depth(S''/J_{G''})=n-|U|-|V|+2,
\] 
where $S''$ is the desired polynomial ring over $K$ with variables corresponding to the verices of $G''$. Note that $G_{u_1}-u_1$ is the graph obtained by attaching the leaves $u'_2,\ldots,u'_r$ to the free vertices $u_2,\ldots,u_r$ of $G''$, respectively. Then, it follows from Proposition~\ref{gluing graphs} that 
\begin{equation}\label{B}
\depth(S_{u_1}/J_{G_{u_1}-u_1})=n-|U|-|V|+2+|U|-1=n-|V|+1.
\end{equation}

The finite graph $G-u_1$ is the disjoint union of an isolated vertex $u'_1$ and the triangle--free Cameron--Walker graph $G|_{W}$ where $W=[n]-\{u_1,u'_1\}$ such that $B_{G|_{W}}$ is a complete bipartite graph on $(U-\{u_1\})\cup V$ and each vertex in $U-\{u_1\}$ has exactly one attached leaf. Now, we distinguish two cases:

(i) If $|U|=2$, then $B_{G|_{W}}$ is a tree. Thus, it follows from Proposition~\ref{depth-tree} that
\[
\depth(S'''/J_{G|_{W}})=n-1,
\]
where $S'''$ is the desired polynomial ring over $K$ with variables corresponding to the vertices of $G|_{W}$, and hence 
\begin{equation}\label{C}
   \depth(S_{u_1}/J_{G-u_1})=n+1, 
\end{equation}
since $u'_1$ is just an isolated vertex in $G-u_1$. 

(ii) If $|U|\geq 3$, then $B_{G|_{W}}$ is not a tree. Thus, $G-u_1$ is the disjoint union of an isolated vertex $u'_1$ and a minimal dense Cameron--Walker graph $G|_{W}$. Then, the induction hypothesis yields that 
\[
\depth(S'''/J_{G|_{W}})=(n-2)-|V|+2=n-|V|,
\]
and hence 
\begin{equation}\label{C'}
   \depth(S_{u_1}/J_{G-u_1})=n-|V|+2, 
\end{equation}
since $u'_1$ is just an isolated vertex in $G-u_1$. 

Note that (\ref{A}) and (\ref{C}) as well as (\ref{A}) and (\ref{C'}) imply that 
\[
\depth(S/J_{G_{u_1}}\oplus S_{u_1}/J_{G-u_1})=n-|V|+2,
\]
since $|V|\geq 2$. This together with (\ref{B}) and Lemma~\ref{depth lemma}, imply that
\[
\depth(S/J_G)=n-|V|+2,
\]
as desired. 
\end{proof}


\begin{Remark}\label{other formula}
   Let $G$ be a minimal dense Cameron--Walker graph. Then, $n=2|U|+|V|$, and hence it is clear from the formula given in Theorem~\ref{depth-nontree} that 
   \[
   \depth(S/J_G)=2|U|+2.
   \]
   We chose the form $n-|V|+2$ in the statement of the theorem just because it is more compatible with the upper bound for depth in (\ref{bounds}).  
\end{Remark}

It is clear from the formula given in Theorem~\ref{depth-nontree} that depth can never attain the upper bound~$n+1$ for any minimal dense Cameron--Walker graph $G$. But, this value can meet the lower bound $f(G)+d(G)$ as we show in the next corollary.

\begin{Corollary}\label{equal with the lower bound}
    Let $G$ be a minimal dense Cameron--Walker graph and let $U\cup V$ be the vertex partition for $B_G$. Then the following conditions are equivalent:
    \begin{enumerate}
        \item $\depth (S/J_G)=f(G)+d(G)$;
        \item $\depth(S/J_G)=6$;
        \item $|U|=2$.
    \end{enumerate}
\end{Corollary}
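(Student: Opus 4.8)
The plan is to prove the three-way equivalence by exploiting the explicit depth formula already established in Theorem~\ref{depth-nontree}, which reduces everything to elementary arithmetic and the combinatorial bound in~(\ref{bounds}). For a minimal dense Cameron--Walker graph we have $\depth(S/J_G)=n-|V|+2$, and by Remark~\ref{other formula} this equals $2|U|+2$. The cleanest route is to show (2)$\iff$(3) first, then (1)$\iff$(2), since both hinge on the same formula.

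For (2)$\iff$(3), I would simply observe that $\depth(S/J_G)=2|U|+2=6$ holds if and only if $|U|=2$. Since $G$ is a \emph{minimal dense} Cameron--Walker graph, $B_G$ is a complete bipartite graph that is not a tree, forcing $|U|,|V|\geq 2$; thus $|U|=2$ is indeed admissible and corresponds exactly to the smallest such graphs. This direction is immediate from Remark~\ref{other formula} and requires no further work.

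For (1)$\iff$(2), I would compute the lower bound $f(G)+d(G)$ explicitly in terms of the combinatorial data of a minimal dense Cameron--Walker graph. The free vertices are precisely the $|U|$ attached leaves $u'_1,\dots,u'_r$ together with any free vertices among $U\cup V$; since each $u_i\in U$ lies in the clique formed with its leaf and also connects into $B_G$, and each $v\in V$ lies in $B_G$, one checks that the free vertices are exactly the leaves, giving $f(G)=|U|$. The diameter $d(G)$ (the length of a longest path) is realized by a path that starts at some leaf $u'_i$, crosses to $V$, returns to another $U$-vertex and out to its leaf: the longest such path has length $4$, so $d(G)=4$. Hence $f(G)+d(G)=|U|+4$. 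Setting $\depth(S/J_G)=2|U|+2$ equal to $|U|+4$ gives $|U|=2$, i.e. $\depth(S/J_G)=6$, so (1) holds precisely when $|U|=2$; this closes the cycle with (3).

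The main obstacle I anticipate is the careful verification of the combinatorial invariants $f(G)$ and $d(G)$ for an arbitrary minimal dense Cameron--Walker graph, rather than the depth computation itself, which is already packaged in Theorem~\ref{depth-nontree}. In particular I must confirm that no vertex of $U\cup V$ is free (so that $f(G)=|U|$ exactly, not more) and that the diameter is genuinely $4$ and not larger — both following from the triangle-free and complete-bipartite structure, which prevents longer induced paths. Once these two equalities are pinned down, the equivalences fall out from matching the closed-form depth against the lower bound, and I would present the argument as the short chain (2)$\iff$(3) via Remark~\ref{other formula}, followed by (1)$\iff$(2) via the computation $f(G)+d(G)=|U|+4$.
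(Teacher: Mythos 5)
Your proposal is correct and follows essentially the same route as the paper: compute $f(G)=|U|$ and $d(G)=4$ so that the lower bound is $|U|+4$, invoke Remark~\ref{other formula} to write the depth as $2|U|+2$, and observe that equality holds precisely when $|U|=2$, i.e.\ when the depth is $6$. The only difference is the (immaterial) order in which you chain the equivalences.
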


\begin{proof}
Note that since $|U|\geq 2$, the only free vertices of $G$ are the leaves attached to the vertices in $U$, and hence $f(G)=|U|$. Moreover, the longest paths in $G$ are between two leaves and so that $d(G)=4$. Thus, $f(G)+d(G)=|U|+4$. Therefore, by Remark~\ref{other formula}, we need to consider the equality $|U|+4=2|U|+2$ which is clearly equivalent to $|U|=2$ as well as $\depth(S/J_G)=6$. Thus, the result follows.     
\end{proof}

The finite graphs $G$ for which $\depth(S/J_G)\leq 5$ have been characterized \cite{RSK1}, \cite{RSK2}. The next interesting characterization would be for the finite graphs $G$ with $\depth(S/J_G)=6$. Just as a step towards this goal, Corollary~\ref{equal with the lower bound}, in particular, provided an infinitely many graphs with this property. More precisely, we have

\begin{Corollary}\label{any n with depth 6}
    For any integer $n\geq 6$, there exists a minimal dense Cameron--Walker graph $G$ with $n$ vertices and $\depth(S/J_G)=6$. 
\end{Corollary}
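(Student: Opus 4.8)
The plan is to use Corollary~\ref{equal with the lower bound} as the workhorse: it tells us that a minimal dense Cameron--Walker graph $G$ has $\depth(S/J_G)=6$ precisely when $|U|=2$. So the task reduces to exhibiting, for each $n\geq 6$, a minimal dense Cameron--Walker graph on $n$ vertices with $|U|=2$. The constraint to exploit is the vertex count formula from Remark~\ref{other formula}, namely $n=2|U|+|V|$. Fixing $|U|=2$ forces $n=4+|V|$, so I would simply solve for $|V|=n-4$.

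First I would check that this is feasible for every $n\geq 6$. Since a dense Cameron--Walker graph requires $B_G$ to be a complete bipartite graph that is not a tree, we need $|U|,|V|\geq 2$; with $|U|=2$ this demands $|V|=n-4\geq 2$, i.e.\ $n\geq 6$, which matches the claimed range exactly. So for each $n\geq 6$ set $|V|=n-4\geq 2$ and construct $G$ concretely: take $U=\{u_1,u_2\}$ and $V=\{v_1,\ldots,v_{n-4}\}$, let $B_G$ be the complete bipartite graph $K_{2,\,n-4}$ on $U\cup V$, and attach exactly one leaf to each of $u_1$ and $u_2$, with no triangles attached to any vertex of $V$. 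The resulting graph is triangle-free, each vertex of $U$ carries exactly one leaf, and $B_G=K_{2,n-4}$ is complete bipartite but not a tree (since $n-4\geq 2$), so $G$ is a minimal dense Cameron--Walker graph by definition.

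It remains only to count vertices and invoke the corollary. The graph $G$ just built has $|U|+|V|=2+(n-4)=n-2$ vertices in $B_G$ plus the two leaves, for a total of $n$ vertices, as required. Since $|U|=2$, Corollary~\ref{equal with the lower bound} gives $\depth(S/J_G)=6$ immediately. This completes the construction for every $n\geq 6$.

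I do not expect any serious obstacle here: the statement is essentially a packaging of the equivalence $(2)\Leftrightarrow(3)$ in Corollary~\ref{equal with the lower bound} together with the realizability of the parameter $|U|=2$ at every admissible vertex count. The only point requiring a moment's care is verifying that the graph with $|U|=2$ genuinely qualifies as a \emph{minimal dense} Cameron--Walker graph for all $n\geq 6$ (triangle-free, one leaf per $U$-vertex, and $B_G$ complete bipartite but not a tree), and that the lower bound $n=6$ in the claim is exactly the threshold at which $|V|=n-4\geq 2$ first holds. Both are routine once the construction is written down.
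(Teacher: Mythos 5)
Your proposal is correct and follows exactly the paper's own argument: set $|U|=2$ and $|V|=n-4$, verify that this yields a minimal dense Cameron--Walker graph on $n$ vertices, and apply Corollary~\ref{equal with the lower bound}. The only difference is that you spell out the feasibility check $|V|\geq 2$ and the vertex count more explicitly than the paper does, which is harmless.
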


\begin{proof}
 Since $n\geq 6$, we set $G$ to be the minimal dense Cameron--Walker graph with $|U|=2$ and $|V|=n-4$. Then, it follows from Corollary~\ref{equal with the lower bound} that $\depth(S/J_G)=6$.  
\end{proof}


\section{Possible values for depth of any binomial edge ideal}\label{possible depth section}

We now consider all the possible values for depth of binomial edge ideals. Indeed, motivated by Corollary~\ref{any n with depth 6} together with \cite{RSK1}, \cite{RSK2}, a natural question is the following

\begin{Question}
    For given integers $t$ and $n$ with $6\leq t\leq n+1$, does there exist a finite connected graph $G$ with $n$ vertices such that $\depth(S/J_G)=t$?
\end{Question}

As an application of our results in Section~\ref{depth section}, we positively answer this question in the next theorem.

\begin{Theorem}\label{possible values of depth}
    Given integers $t$ and $n$ such that $6\leq t\leq n+1$, there exists a finite connected graph $G$ with $n$ vertices and $\depth(S/J_G)=t$.
\end{Theorem}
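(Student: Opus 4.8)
The plan is to realize each admissible pair $(n,t)$ by a single construction governed by the gap invariant $g := n+1-t$, which measures the deficit from the maximal possible depth; the hypothesis $6\le t\le n+1$ forces $0\le g\le n-5$. The guiding observation is that attaching a path at a free vertex, via Proposition~\ref{gluing graphs}, increases the number of vertices and the depth by the same amount and therefore preserves $g$: if $G'$ is obtained by identifying an endpoint of a path $P_m$ on $m$ vertices with a free vertex of $G_0$, then $P_m$ is a connected block graph, so its binomial edge ideal has depth $m+1$ over the corresponding ring (by \cite[Theorem~1.1]{EHH}), and Proposition~\ref{gluing graphs} gives $\depth(S'/J_{G'})=\depth(S_0/J_{G_0})+(m+1)-2=\depth(S_0/J_{G_0})+(m-1)$, while the vertex count also grows by $m-1$. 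Thus I only need a seed graph with the correct gap $g$, and then grow it by a path to hit the prescribed $n$ and $t$ at the same time.

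First I would dispose of the boundary case $g=0$, i.e. $t=n+1$. Here the path $P_n$ on $n$ vertices is a connected block graph, so Proposition~\ref{depth-tree} (equivalently \cite[Theorem~1.1]{EHH}) yields $\depth(S/J_{P_n})=n+1=t$, and $P_n$ has $n$ vertices; the condition $t\ge 6$ only forces $n\ge 5$, which is automatic. For $g\ge 1$, equivalently $6\le t\le n$, I would take as seed the minimal dense Cameron--Walker graph $G_0$ with $|U|=2$ and $|V|=n+2-t$; the inequality $t\le n$ guarantees $|V|\ge 2$, so $G_0$ is well defined, and it has $2|U|+|V|=n+6-t$ vertices. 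By Theorem~\ref{depth-nontree} (or directly Remark~\ref{other formula}), $\depth(S_0/J_{G_0})=2|U|+2=6$, so $G_0$ has exactly gap $g=n+1-t$.

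If $t=6$ I would simply take $G=G_0$, which has $n+6-t=n$ vertices and depth $6=t$ (this recovers Corollary~\ref{any n with depth 6}). If $t>6$ I would form $G$ by gluing the path $P_{t-5}$, on $t-5\ge 2$ vertices, to one of the leaves of $G_0$ along a free endpoint of $P_{t-5}$. Then Proposition~\ref{gluing graphs} gives $\depth(S/J_G)=6+\big((t-5)+1\big)-2=t$, the vertex count is $(n+6-t)+(t-5)-1=n$, and $G$ is connected, as required.

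The computations here are all routine once the three ingredients (the block-graph formula, the formula of Theorem~\ref{depth-nontree}, and the gluing Proposition~\ref{gluing graphs}) are available, so there is no genuine analytic difficulty; the work is purely organizational. The points demanding care are the bookkeeping ones: I would check that the hypotheses of Proposition~\ref{gluing graphs} really hold — the identified vertex must be free in both pieces, which it is, since a pendant leaf of $G_0$ and an endpoint of $P_{t-5}$ each lie in a unique maximal clique — and I would verify that the boundary instances ($t=6$, $t=n$, $t=n+1$, and the minimal case $n=5$, where $t=6=n+1$ is the only option and is handled by the path) fall correctly into the case division without violating the constraints $|U|,|V|\ge 2$ underlying the minimal dense Cameron--Walker construction.
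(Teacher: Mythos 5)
Your proof is correct, and it uses the same three ingredients as the paper (the block-graph depth formula via Proposition~\ref{depth-tree}/\cite[Theorem~1.1]{EHH}, the formula of Theorem~\ref{depth-nontree} for minimal dense Cameron--Walker graphs, and the gluing Proposition~\ref{gluing graphs}), but organizes them differently. The paper tunes the depth of the seed itself: for $t$ even it takes the minimal dense Cameron--Walker graph with $|U|=(t-2)/2$ and $|V|=n-t+2$ so that $\depth(S/J_G)=2|U|+2=t$ outright, and for $t$ odd it takes $|U|=(t-3)/2$ (giving depth $t-1$ on $n-1$ vertices) and attaches a single extra leaf to reach depth $t$; this forces a parity case split. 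You instead freeze the seed at $|U|=2$, $|V|=n+2-t$ (so its depth is always $6$ and its ``gap'' $n+1-t$ is already correct) and then grow a path $P_{t-5}$ at a free leaf, exploiting the fact that gluing a path adds equally to the vertex count and to the depth. Your parameterization avoids the parity dichotomy entirely and makes the invariant being preserved ($g=n+1-t$) explicit, at the cost of gluing a longer appendage; the arithmetic ($6+(t-5+1)-2=t$ and $(n+6-t)+(t-5)-1=n$) and the freeness checks at the identified vertex are all in order, and the boundary cases $t=6$, $t=n$, $t=n+1$, $n=5$ are handled correctly.
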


\begin{proof}
First, suppose that $t=n+1$. Then, by Proposition~\ref{depth-tree}, for any Cameron--Walker graph $G$ with $n$ vertices whose $B_G$ is a tree, we have $\depth(S/J_G)=t$. So, we may consider $G$ to be the star graph with $n$ vertices which can also be seen as the Cameron--Walker graph with $|U|=1$ and $|V|=n-2$. 

Next, suppose that $t\leq n$. Then, we consider the following two cases:    

(i) Assume that $t$ is even. Then, let $U$ be a set of vertices with $|U|=(t-2)/2$, and let $V$ be a set of vertices disjoint from $U$ with $|V|=n-t+2$. Note that $|U|, |V|\geq 2$. Then, let $G$ be the minimal dense Cameron--Walker graph for which the partition of the vertices of $B_G$ is $U\cup V$. Thus, it follows from Remark~\ref{other formula} that $\depth(S/J_G)=t$.

(ii) Assume that $t$ is odd. Then, let $U$ be a set of vertices with $|U|=(t-3)/2$, and let $V$ be a set of vertices disjoint from $U$ with $|V|=n-t+2$. Note that $|U|, |V|\geq 2$. Now, let $G'$ be the minimal dense Cameron--Walker graph for which the partition of the vertices of $B_G$ is $U\cup V$. By Remark~\ref{other formula}, we have $\depth(S/J_{G'})=t-1$. Now, let $G$ be the finite graph obtained by attaching one new leaf to one of the leaves of $G'$. Then, Proposition~\ref{gluing graphs} implies that $\depth(S/J_G)=\depth(S/J_{G'})+1=t$, since $\depth(R/J_{K_2})=3$ where $R$ is a desired polynomial ring over $K$ with four variables corresponding to the two vertices of $K_2$. This completes the proof. 
\end{proof}

We conclude the present paper with

\begin{Question}
    Given integers $t,n,d$ with $4 \leq t \leq n+1 \leq d \leq 2n-2$, does there exist a finite connected graph $G$ on $n$ vertices for which $\dim(S/J_G) = d$ and $\depth(S/J_G) = t$?
\end{Question}

\section*{Acknowledgment}

The present paper was completed while the authors stayed at Sabanc\i~\"Universitesi, Istanbul, Turkey, August~05 to~14, 2025.  Sara Saeedi Madani was in part supported by a grant from IPM (No. 1404130019).

\end{document}